\documentclass[12pt]{article}
\usepackage[a4paper,margin=1in]{geometry}
\usepackage{amsmath,amssymb,mathtools,amsthm}

\newcommand{\journalname}[1]{}
\newcommand{\titlerunning}[1]{}
\newcommand{\authorrunning}[1]{}
\newcommand{\at}{\\}
\newcommand{\email}[1]{\texttt{#1}}
\makeatletter
\newcommand{\institute}[1]{\gdef\@institute{#1}}
\gdef\@institute{}
\let\portable@maketitle\maketitle
\renewcommand{\maketitle}{%
  \portable@maketitle
  \begin{center}\small\@institute\end{center}
  \vspace{1em}%
}
\makeatother
\newcommand{\keywords}[1]{%
  \par\smallskip\noindent\textbf{Keywords: }%
  \begingroup\def\and{; }#1\endgroup\par}

\theoremstyle{plain}
\newtheorem{theorem}{Theorem}[section]
\newtheorem{lemma}[theorem]{Lemma}

\newtheorem{corollary}[theorem]{Corollary}
\theoremstyle{definition}
\newtheorem{definition}[theorem]{Definition}
\theoremstyle{remark}
\newtheorem{remark}[theorem]{Remark}
\usepackage{microtype}
\usepackage[colorlinks=true,linkcolor=blue,citecolor=blue,urlcolor=blue]{hyperref}
\usepackage[nameinlink,noabbrev]{cleveref}

\allowdisplaybreaks

\newcommand{\R}{\mathbb{R}}
\renewcommand{\P}{\mathcal{P}}
\newcommand{\A}{\mathcal{A}}
\newcommand{\OT}{\mathsf{OT}}
\newcommand{\EOT}{\mathsf{EOT}}
\newcommand{\BC}{\mathsf{BC}}
\newcommand{\EBC}{\mathsf{EBC}}
\newcommand{\KL}{\operatorname{KL}}
\newcommand{\spt}{\operatorname{spt}}
\newcommand{\T}{\mathcal{T}}
\newcommand{\Strans}{\mathcal{S}}
\newcommand{\Btrans}{\mathcal{B}}
\renewcommand{\d}{\,\mathrm{d}}

\crefname{theorem}{Theorem}{Theorems}
\Crefname{theorem}{Theorem}{Theorems}
\crefname{lemma}{Lemma}{Lemmas}
\Crefname{lemma}{Lemma}{Lemmas}
\crefname{proposition}{Proposition}{Propositions}
\Crefname{proposition}{Proposition}{Propositions}
\crefname{corollary}{Corollary}{Corollaries}
\Crefname{corollary}{Corollary}{Corollaries}
\crefname{definition}{Definition}{Definitions}
\Crefname{definition}{Definition}{Definitions}
\crefname{remark}{Remark}{Remarks}
\Crefname{remark}{Remark}{Remarks}

\journalname{Journal of Optimization Theory and Applications}

\title{Continuous Dual Maximizers for Fixed-Reference Entropy-Regularized Wasserstein Barycenters}
\titlerunning{Continuous Dual Maximizers for Regularized Barycenters}
\author{Chamila Malagoda Gamage}
\authorrunning{C. M. Gamage}
\institute{Chamila Malagoda Gamage \at
Department of Mathematics, University of Florida \\
\email{cgamage@ufl.edu}}
\date{}

\hypersetup{
  pdftitle={Continuous Dual Maximizers for Fixed-Reference Entropy-Regularized Wasserstein Barycenters},
  pdfauthor={Chamila Malagoda Gamage},
  pdfsubject={Entropy-regularized Wasserstein barycenters and continuous dual attainment},
  pdfkeywords={optimal transport, Wasserstein barycenter, entropy regularization, duality, dual attainment}
}

\begin{document}
\maketitle

\begin{abstract}
We study the fixed-reference entropy-regularized Wasserstein barycenter
problem, in which a probability measure $\eta$ on the barycenter space is
chosen before optimization and the $i$th transport plan is regularized
relative to $\nu_i\otimes\eta$.  Earlier work established a dual formulation
for this model and equality between the primal and dual optimal values.
However, equality of values does not by itself guarantee that the dual
supremum is achieved.  Our main result shows that, on compact metric spaces
with continuous transport costs, the dual problem admits maximizing
potentials in the original class of continuous functions.  For completeness,
we also establish existence and uniqueness of the optimal tuple of transport
plans and of the barycenter.  The maximizing potentials recover the optimal
plans through exponential primal--dual relations and give an explicit density
of the barycenter with respect to $\eta$.  This density is continuous and
strictly positive.  We further show that the potentials inherit regularity
from the costs and are unique up to the natural additive gauge
transformations.  These results strengthen the known value-duality theory and
provide a complete continuous primal--dual description of the
fixed-reference regularized barycenter problem.
\keywords{Entropy-Regularized Optimal Transport \and Wasserstein Barycenter \and
Convex Duality \and Dual Attainment \and Schr\"odinger Potentials}

\end{abstract}

\section{Introduction}\label{sec:introduction}

Optimal transport compares two probability measures by finding the least
costly way to move one measure into the other.  A Wasserstein barycenter uses
this idea to define an average of several measures: it minimizes a weighted
sum of transport costs from the input measures to one common probability
measure \cite{agueh2011barycenters,villani2009optimal}.

Entropy regularization adds a relative-entropy term to the transport cost.  It
makes the optimization problem strictly convex and leads to the exponential
structure behind Sinkhorn-type methods
\cite{cuturi2013sinkhorn,peyre2019computational}.  For barycenters, however,
one must also choose the reference measure used in the entropy term.  In the
model considered here, a probability measure $\eta$ on the barycenter space is
chosen before the optimization, and the $i$th transport plan is regularized
with respect to $\nu_i\otimes\eta$.  We refer to this as the
\emph{fixed-reference} model.

Li, Genevay, Yurochkin, and Solomon established a dual formulation for this
problem and proved equality of the primal and dual optimal values
\cite{li2020continuous}.  Equality of values does not by itself provide a
maximizing family of potentials.  The existence of continuous maximizers is
important because these potentials describe the optimal transport plans,
recover the barycenter, and form the variables used by dual numerical methods.

The main result of this paper proves that the fixed-reference dual supremum is
attained in its original class of continuous functions.  The proof starts with
a maximizing sequence and improves it in two steps.  A source entropic
transform gives useful normalization and compactness for the source
potentials.  A second transform treats the target potentials together and
preserves their weighted zero-sum condition.  The resulting sequence is
uniformly bounded and equicontinuous, so a continuous maximizer follows from
the Arzel\`a--Ascoli theorem.

We also give a direct proof that the primal problem has a unique optimal tuple
of plans and a unique barycenter.  From primal--dual equality we obtain the
exponential optimality relations, an explicit density of the barycenter with
respect to $\eta$, regularity of the potentials, and uniqueness up to the
natural gauge freedom.

The paper is organized as follows.  \Cref{sec:transport} recalls classical and
entropy-regularized transport.  \Cref{sec:barycenter} introduces the
fixed-reference barycenter and proves primal existence and uniqueness.
\Cref{sec:duality} presents its dual formulation.  Continuous dual attainment
is proved in \Cref{sec:attainment}, and \Cref{sec:optimality} gives the
primal--dual relations.  The paper ends with conclusions and possible directions for future work.

\section{Classical and Entropy-Regularized Optimal Transport}
\label{sec:transport}

Let $X$ and $Y$ be compact metric spaces.  We denote by $\P(X)$ the set of
Borel probability measures on $X$ and by $C(X)$ the real-valued continuous
functions on $X$.  Let $\pi_X:X\times Y\to X$ and
$\pi_Y:X\times Y\to Y$ be the coordinate projections.

For $\mu\in\P(X)$ and $\nu\in\P(Y)$, a \emph{transport plan} from $\mu$ to
$\nu$ is a probability measure $\gamma\in\P(X\times Y)$ satisfying
\[
(\pi_X)_\#\gamma=\mu,
\qquad
(\pi_Y)_\#\gamma=\nu.
\]
The set of all such plans is
\begin{equation*}
\Pi(\mu,\nu)
:=
\left\{
\gamma\in\P(X\times Y):
(\pi_X)_\#\gamma=\mu,
\ (\pi_Y)_\#\gamma=\nu
\right\}.
\end{equation*}

\subsection{The Classical Problem}

For a continuous cost $c:X\times Y\to\R$, the Kantorovich optimal transport
problem is
\begin{equation}\label{eq:classical-ot}
\OT_c(\mu,\nu)
:=
\inf_{\gamma\in\Pi(\mu,\nu)}
\int_{X\times Y}c(x,y)\d\gamma(x,y).
\end{equation}
On compact spaces, an optimal plan exists.  The corresponding dual formula is
\begin{align}\label{eq:classical-ot-dual}
\OT_c(\mu,\nu)
&=
\sup\Bigg\{
\int_X u\d\mu+\int_Y v\d\nu:\nonumber\\
&\qquad u\in C(X),\quad v\in C(Y),\nonumber\\
&\qquad u(x)+v(y)\le c(x,y)
\quad\text{for all }(x,y)\in X\times Y
\Bigg\}.
\end{align}
Here and below the source potential in \eqref{eq:classical-ot-dual} is denoted
by $u$; thus the constraint is $u(x)+v(y)\le c(x,y)$.  This is the classical
Kantorovich duality theorem \cite{villani2009optimal}.

\subsection{Entropy Regularization}

For probability measures $\gamma$ and $\xi$ on $X\times Y$, define
\begin{equation}\label{eq:kl}
\KL(\gamma\mid\xi)
:=
\begin{cases}
\displaystyle
\int_{X\times Y}
\log\!\left(\frac{\d\gamma}{\d\xi}\right)\d\gamma,
&\gamma\ll\xi,\\[0.8em]
+\infty,&\text{otherwise}.
\end{cases}.
\end{equation}
We use the convention $0\log0=0$.

For $\epsilon>0$, the usual entropy-regularized transport problem is
\begin{equation}\label{eq:entropic-ot}
\EOT_{\epsilon,c}(\mu,\nu)
:=
\inf_{\gamma\in\Pi(\mu,\nu)}
\left\{
\int_{X\times Y}c\d\gamma
+
\epsilon\KL(\gamma\mid\mu\otimes\nu)
\right\}.
\end{equation}
The entropy term makes the objective strictly convex, so the optimal plan is
unique.  With the convention \eqref{eq:kl}, the entropic Kantorovich duality
formula is
\begin{align}\label{eq:entropic-ot-dual}
\EOT_{\epsilon,c}(\mu,\nu)
&=
\epsilon+
\sup_{u\in C(X),\,v\in C(Y)}
\Bigg\{\nonumber\\
&\qquad \int_Xu\d\mu+\int_Yv\d\nu\nonumber\\
&\qquad -
\epsilon\int_{X\times Y}
\exp\!\left(
\frac{u(x)+v(y)-c(x,y)}{\epsilon}
\right)
\d(\mu\otimes\nu)(x,y)
\Bigg\}.
\end{align}
The additive constant $\epsilon$ comes from using $r\log r$ in the relative
entropy rather than the shifted function $r\log r-r$.  Formula
\eqref{eq:entropic-ot-dual} is an established result in
entropy-regularized transport; see, for example,
\cite{marino2020optimal,peyre2019computational}.

In \eqref{eq:entropic-ot}, the reference $\mu\otimes\nu$ depends on both
marginals.  For the barycenter problem below, the second marginal is unknown.
We therefore use a fixed probability measure $\eta$ on the barycenter space as
the second factor of every entropy reference.

\section{The Fixed-Reference Barycenter Problem}\label{sec:barycenter}

Let $p\ge2$.  For each $i=1,\ldots,p$, let $X_i$ be a compact metric space,
let $\nu_i\in\P(X_i)$, and let $c_i\in C(X_i\times Y)$.  Let
$\lambda_i>0$ satisfy
\begin{equation*}
\sum_{i=1}^p\lambda_i=1.
\end{equation*}
Fix $\eta\in\P(Y)$.  After replacing each space by the support of its
measure, we may assume
\begin{equation*}
\spt\nu_i=X_i,
\qquad
\spt\eta=Y.
\end{equation*}
This assumption does not change the optimization problem and allows us to
turn almost-everywhere identities between continuous functions into pointwise
identities.  Set
\begin{equation*}
\xi_i:=\nu_i\otimes\eta,
\qquad i=1,\ldots,p.
\end{equation*}

The classical barycenter associated with the costs $c_i$ is defined by
\begin{equation*}
\BC(\nu_1,\ldots,\nu_p)
:=
\inf_{\nu\in\P(Y)}
\sum_{i=1}^p\lambda_i\OT_{c_i}(\nu_i,\nu).
\end{equation*}
The regularized model considered here keeps the same common-marginal
structure but adds entropy to the transport plans.

\begin{definition}[Fixed-Reference Entropy-Regularized Barycenter]
\label{def:ebc}
The fixed-reference entropy-regularized barycenter value is
\begin{align}\label{eq:ebc-primal}
\EBC_{\epsilon,\eta}(\nu_1,\ldots,\nu_p)
:=
\inf_{\substack{\nu\in\P(Y)\\
\gamma_i\in\Pi(\nu_i,\nu)}}
\sum_{i=1}^p\lambda_i
\left[
\int_{X_i\times Y}c_i\d\gamma_i
+
\epsilon\KL(\gamma_i\mid\xi_i)
\right].
\end{align}
A minimizing common second marginal is called a fixed-reference
entropy-regularized barycenter.
\end{definition}

\begin{remark}[Why the Reference Is Fixed]\label{rem:fixed-reference}
The measure $\eta$ is chosen before the optimization.  This is the model used
in \cite{li2020continuous}.  It is different from a moving-reference model
containing
\[
\KL(\gamma_i\mid\nu_i\otimes\nu),
\]
where the unknown barycenter $\nu$ also appears in the reference measure.  We
study only the fixed-reference problem, because it is the model associated
with the duality formula used below.
\end{remark}

It is convenient to write the objective for a feasible tuple
$\boldsymbol\gamma=(\gamma_1,\ldots,\gamma_p)$ as
\begin{equation*}
\mathcal F_\epsilon(\boldsymbol\gamma)
:=
\sum_{i=1}^p\lambda_i
\left[
\int_{X_i\times Y}c_i\d\gamma_i
+
\epsilon\KL(\gamma_i\mid\xi_i)
\right].
\end{equation*}

\begin{theorem}[Primal Existence and Uniqueness]
\label{thm:primal-existence}
The problem \eqref{eq:ebc-primal} has a unique optimal tuple
$(\gamma_{1,\epsilon},\ldots,\gamma_{p,\epsilon})$.  Its common second
marginal is the unique barycenter, denoted by $\nu_\epsilon$.
\end{theorem}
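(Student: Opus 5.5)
We prove existence by the direct method in the weak topology and uniqueness by strict convexity of the relative entropy. Let $\mathcal C$ denote the set of feasible tuples $\boldsymbol\gamma=(\gamma_1,\ldots,\gamma_p)$ with $\gamma_i\in\Pi(\nu_i,\nu)$ for some common $\nu\in\P(Y)$. This set is nonempty and the value is finite: taking $\nu=\eta$ and $\gamma_i=\xi_i$ gives $\mathcal F_\epsilon(\boldsymbol\xi)=\sum_i\lambda_i\int c_i\d\xi_i<\infty$. The value is also bounded below, since $c_i$ is bounded on the compact space $X_i\times Y$ and $\KL\ge0$ for probability measures (Jensen).

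\emph{Existence.} Each $\P(X_i\times Y)$ is weakly compact by Prokhorov's theorem, because $X_i\times Y$ is compact. We first check that $\mathcal C$ is weakly closed in $\prod_i\P(X_i\times Y)$. Marginal maps are weakly continuous, so the constraints $(\pi_{X_i})_\#\gamma_i=\nu_i$ and $(\pi_Y)_\#\gamma_i=(\pi_Y)_\#\gamma_1$ pass to weak limits. Next, $\gamma\mapsto\int c_i\d\gamma$ is weakly continuous because $c_i$ is continuous and bounded. The map $\gamma\mapsto\KL(\gamma\mid\xi_i)$ is weakly lower semicontinuous, by the Donsker--Varadhan variational formula
\[
\KL(\gamma\mid\xi_i)=\sup_{f\in C(X_i\times Y)}\Big\{\int f\d\gamma-\log\int e^{f}\d\xi_i\Big\},
\]
which exhibits it as a supremum of continuous functionals. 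Hence $\mathcal F_\epsilon$ is weakly lower semicontinuous on the compact set $\mathcal C$. A minimizing sequence then has a weakly convergent subsequence whose limit is a minimizer. This minimizer has finite objective, so each $\gamma_{i,\epsilon}\ll\xi_i$.

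\emph{Uniqueness.} The set $\mathcal C$ is convex. If $\boldsymbol\gamma,\boldsymbol\gamma'\in\mathcal C$ have common marginals $\nu,\nu'$, then the tuple $\frac12(\boldsymbol\gamma+\boldsymbol\gamma')$ has common marginal $\frac12(\nu+\nu')$ and keeps the source marginals $\nu_i$. The cost terms are linear. On the set $\{\gamma\ll\xi_i\}$, the functional $\KL(\cdot\mid\xi_i)$ equals $\int\phi(d\gamma/d\xi_i)\d\xi_i$ with $\phi(r)=r\log r$ strictly convex. Hence it is strictly convex in the following sense: if $\gamma\neq\gamma'$ both have finite entropy, then $\KL$ of their midpoint is strictly less than the average of their entropies. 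To see this, note that the densities differ on a set of positive $\xi_i$-measure, and strict convexity of $\phi$ gives strict inequality there.

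Now suppose $\boldsymbol\gamma\neq\boldsymbol\gamma'$ are both optimal. Then $\gamma_j\neq\gamma_j'$ for some $j$. Since $\epsilon\lambda_j>0$ and the other terms are convex, the midpoint tuple has objective strictly below the optimal value, a contradiction. Thus the optimal tuple is unique.

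Finally, the barycenter is unique. Any minimizing $\nu$ in \eqref{eq:ebc-primal} comes with plans forming an optimal tuple, which must be $\boldsymbol\gamma_\epsilon$. So $\nu=(\pi_Y)_\#\gamma_{1,\epsilon}=:\nu_\epsilon$.

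The main technical point is the lower semicontinuity of $\KL$ in the weak topology; I would cite the Donsker--Varadhan formula or the standard Posner-type result rather than prove it. The remaining steps are routine.
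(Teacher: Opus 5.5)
Your proposal is correct and follows the paper's proof essentially step for step. Existence comes from the direct method: the feasible set is weakly compact, the cost terms are weakly continuous, and relative entropy is weakly lower semicontinuous. Uniqueness comes from strict convexity of relative entropy on its finite domain, together with positive weights and linear cost terms. Your Donsker--Varadhan argument and your pointwise strict-convexity argument for the midpoint fill in details that the paper asserts without proof.
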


\begin{proof}
We first prove existence.  Let $\Gamma$ be the set of tuples
$(\gamma_1,\ldots,\gamma_p)\in\prod_{i=1}^p\P(X_i\times Y)$ such that the
first marginal of $\gamma_i$ is $\nu_i$ and all second marginals are equal.
The tuple $(\nu_i\otimes\eta)_{i=1}^p$ belongs to $\Gamma$.  Hence the
feasible set is nonempty, and the objective is finite at this tuple.

Because all the spaces are compact, each space $\P(X_i\times Y)$ is weakly
compact.  The marginal maps are continuous for weak convergence, so the
marginal conditions defining $\Gamma$ are closed.  Therefore $\Gamma$ is
weakly compact.  The cost terms are weakly continuous because the costs are
continuous.  Relative entropy with respect to a fixed measure is weakly lower
semicontinuous.  We can now apply the direct method: a minimizing sequence has
a weakly convergent subsequence, its limit remains in $\Gamma$, and lower
semicontinuity shows that the limit minimizes $\mathcal F_\epsilon$.

We next prove uniqueness.  The set $\Gamma$ is convex.  Relative entropy with
respect to a fixed reference measure is strictly convex on its finite domain,
and every weight $\lambda_i$ is positive.  Since the cost terms are linear,
$\mathcal F_\epsilon$ is strictly convex in the tuple of plans.  Thus two
distinct minimizing tuples cannot exist.  The optimal tuple is unique, and
its common second marginal is therefore unique as well.  This completes the
proof.
\end{proof}

\section{Dual Formulation}\label{sec:duality}

Define the class of continuous potentials
\begin{equation*}
\A_C
:=
\left\{
(\boldsymbol\phi,\boldsymbol\psi):
\begin{array}{l}
\phi_i\in C(X_i),\ \psi_i\in C(Y),\quad i=1,\ldots,p,\\[0.2em]
\displaystyle
\sum_{i=1}^p\lambda_i\psi_i=0
\quad\text{on }Y
\end{array}
\right\}.
\end{equation*}
For $(\boldsymbol\phi,\boldsymbol\psi)\in\A_C$, let
\begin{align}\label{eq:dual-functional}
J_\epsilon(\boldsymbol\phi,\boldsymbol\psi)
:=
\sum_{i=1}^p\lambda_i
\Bigg[
&\int_{X_i}\phi_i\d\nu_i\nonumber\\
&-
\epsilon\int_{X_i\times Y}
\exp\!\left(
\frac{\phi_i(x)+\psi_i(y)-c_i(x,y)}{\epsilon}
\right)
\d\xi_i(x,y)
\Bigg].
\end{align}
The weighted zero-sum condition comes from the common second marginal.  In
fact, if all plans have second marginal $\nu$, then
\[
\sum_{i=1}^p\lambda_i\int_Y\psi_i\d\nu
=
\int_Y\left(\sum_{i=1}^p\lambda_i\psi_i\right)\d\nu
=0.
\]

The next identity explains both weak duality and the exponential form of the
optimal plans.

\begin{lemma}[Primal--Dual Gap]\label{lem:duality-gap}
Let $\boldsymbol\gamma=(\gamma_1,\ldots,\gamma_p)$ be feasible for
\eqref{eq:ebc-primal} and suppose that
$\mathcal F_\epsilon(\boldsymbol\gamma)<\infty$.  Write
\[
q_i:=\frac{\d\gamma_i}{\d\xi_i},
\qquad
s_i(x,y):=
\frac{\phi_i(x)+\psi_i(y)-c_i(x,y)}{\epsilon}.
\]
Then, for every
$(\boldsymbol\phi,\boldsymbol\psi)\in\A_C$,
\begin{align}\label{eq:duality-gap}
\mathcal F_\epsilon(\boldsymbol\gamma)
-
\left[
\epsilon+J_\epsilon(\boldsymbol\phi,\boldsymbol\psi)
\right]
=
\epsilon
\sum_{i=1}^p\lambda_i
\int_{X_i\times Y}
\left[
q_i\log q_i-q_i+e^{s_i}-q_i s_i
\right]
\d\xi_i.
\end{align}
In particular, the right-hand side is nonnegative.  It vanishes if and only
if
\begin{equation*}
q_i=e^{s_i}
\qquad \xi_i\text{-a.e.},
\qquad i=1,\ldots,p.
\end{equation*}
\end{lemma}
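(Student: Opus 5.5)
The identity follows by expanding both sides and using the marginal constraints. Since $\mathcal F_\epsilon(\boldsymbol\gamma)<\infty$, each $\KL(\gamma_i\mid\xi_i)$ is finite. Hence $\gamma_i\ll\xi_i$, $q_i$ exists, and $q_i\log q_i\in L^1(\xi_i)$; note that $(q_i\log q_i)_-\le e^{-1}$ is automatically integrable because $\xi_i$ is a probability measure. The functions $\phi_i,\psi_i,c_i$ are continuous on compact sets, so $s_i$ is bounded. It follows that $e^{s_i}$, $q_i$ and $q_is_i$ are all $\xi_i$-integrable, with $\int q_i\d\xi_i=1$. Every integral below is therefore finite and may be split freely.

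I will expand the right-hand side of \eqref{eq:duality-gap} term by term. First, $\epsilon\int q_i\log q_i\d\xi_i=\epsilon\KL(\gamma_i\mid\xi_i)$. Second, $-\epsilon\int q_i\d\xi_i=-\epsilon$, and summing with weights gives $-\epsilon$ because $\sum\lambda_i=1$. Third, $\epsilon\int e^{s_i}\d\xi_i$ is exactly the negative of the exponential term in $J_\epsilon$. Fourth, $-\epsilon\int q_is_i\d\xi_i=-\int(\phi_i(x)+\psi_i(y)-c_i(x,y))\d\gamma_i$. Because $\gamma_i\in\Pi(\nu_i,\nu)$ with a common $\nu$, this last integral equals $-\int\phi_i\d\nu_i-\int\psi_i\d\nu+\int c_i\d\gamma_i$. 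Taking the weighted sum, the $\psi$-terms give $-\int(\sum_i\lambda_i\psi_i)\d\nu=0$ by the zero-sum constraint in $\A_C$. Collecting everything, the right-hand side equals
\[
\mathcal F_\epsilon(\boldsymbol\gamma)-\epsilon-\sum_i\lambda_i\Big[\int\phi_i\d\nu_i-\epsilon\int e^{s_i}\d\xi_i\Big],
\]
which is the left-hand side.

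For nonnegativity and the equality case, I use the pointwise Fenchel--Young (Young) inequality for $h(q)=q\log q-q$ on $[0,\infty)$. Its conjugate is $h^*(s)=e^s$, so $h(q)+e^s-qs\ge0$ for all $q\ge0$ and $s\in\R$. This can be checked directly: for fixed $s$, the function $q\mapsto q\log q-q-qs$ is strictly convex with unique minimizer $q=e^s$, where its value is $-e^s$. The case $q=0$ is covered by the convention $0\log0=0$, and there the expression equals $e^s>0$. Thus the integrand is nonnegative and vanishes exactly when $q_i=e^{s_i}$. Since the $\lambda_i$ and $\epsilon$ are positive, the weighted sum of integrals of nonnegative functions is zero if and only if each integrand vanishes $\xi_i$-a.e., which gives the stated characterization.

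The only delicate point is the integrability bookkeeping, namely justifying that $q_is_i$ is integrable and that the integral can be split into four finite pieces. Boundedness of $s_i$ and finiteness of the entropy handle this. The rest is direct computation.
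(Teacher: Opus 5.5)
Your proof is correct and follows essentially the same route as the paper. You write $\gamma_i=q_i\xi_i$, use the first-marginal constraint, cancel the $\psi$-terms via the common second marginal and the weighted zero-sum condition, and conclude with the pointwise Fenchel--Young inequality for the pair $q\log q-q$ and $e^s$. Your integrability bookkeeping is a welcome detail that the paper leaves implicit: finite entropy together with boundedness of $s_i$ is what justifies splitting the integral into finite pieces.
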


\begin{proof}
We first use the marginal conditions.  The first marginal of $\gamma_i$ is
$\nu_i$, and therefore
\[
\int_{X_i}\phi_i\d\nu_i
=
\int_{X_i\times Y}\phi_i(x)\d\gamma_i(x,y).
\]
All plans have the same second marginal, say $\nu$.  Hence the target terms
cancel:
\[
\sum_{i=1}^p\lambda_i
\int_{X_i\times Y}\psi_i(y)\d\gamma_i(x,y)
=
\int_Y\left(\sum_{i=1}^p\lambda_i\psi_i(y)\right)\d\nu(y)
=0.
\]

Now write $\gamma_i=q_i\xi_i$ and substitute these identities into the
primal and dual objectives.  Since each $\gamma_i$ is a probability measure,
$\int q_i\d\xi_i=1$.  Collecting the terms gives exactly
\eqref{eq:duality-gap}; the separate constant $\epsilon$ comes from the
terms $-\epsilon q_i$ after integration and from
$\sum_i\lambda_i=1$.

It remains to check the sign of the integrand.  For $q\ge0$ and $s\in\R$,
\[
q\log q-q+e^s\ge qs.
\]
This is the Fenchel inequality for the conjugate pair
$q\mapsto q\log q-q$ and $s\mapsto e^s$.  Equality holds exactly when
$q=e^s$.  Applying this scalar fact pointwise proves nonnegativity of the gap
and gives the stated equality condition.  This completes the proof.
\end{proof}

\begin{theorem}[Strong Value Duality; \cite{li2020continuous}]
\label{thm:value-duality}
Under the preceding assumptions,
\begin{equation*}
\EBC_{\epsilon,\eta}(\nu_1,\ldots,\nu_p)
=
\epsilon+
\sup_{(\boldsymbol\phi,\boldsymbol\psi)\in\A_C}
J_\epsilon(\boldsymbol\phi,\boldsymbol\psi).
\end{equation*}
\end{theorem}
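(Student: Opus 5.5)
Weak duality is already contained in \cref{lem:duality-gap}: for the optimal tuple from \cref{thm:primal-existence} and any $(\boldsymbol\phi,\boldsymbol\psi)\in\A_C$, the right-hand side of \eqref{eq:duality-gap} is nonnegative. Taking the supremum over the dual variables gives $\EBC_{\epsilon,\eta}\ge\epsilon+\sup J_\epsilon$. The work is therefore the reverse inequality. I would prove it by a convex-analytic perturbation argument, not by constructing a dual maximizer, since attainment is the subject of the next section.

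Consider the convex function $G:C(Y)^p\to\R$ defined by
\[
G(\boldsymbol\psi):=\sup_{\boldsymbol\phi}\sum_i\lambda_i\Bigl[\int\phi_i\d\nu_i-\epsilon\int e^{(\phi_i+\psi_i-c_i)/\epsilon}\d\xi_i\Bigr].
\]
The supremum over each $\phi_i$ decouples and can be computed explicitly by optimizing an additive constant and then pointwise in $x$ (the $c$-entropic transform). This yields
\[
G(\boldsymbol\psi)=\sum_i\lambda_i\Bigl[-\epsilon\int_{X_i}\log\Bigl(\int_Y e^{(\psi_i(y)-c_i(x,y))/\epsilon}\d\eta(y)\Bigr)\d\nu_i(x)\Bigr]-\epsilon .
\]
This $G$ is finite, concave, and Lipschitz in the sup norm on $C(Y)^p$. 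The dual value equals $\sup\{G(\boldsymbol\psi):\sum_i\lambda_i\psi_i=0\}$.

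I then view this as a Fenchel--Rockafellar problem on $E=C(Y)^p$, with the constraint encoded by the indicator of the closed subspace $L=\{\sum\lambda_i\psi_i=0\}$. Since $G$ is continuous everywhere, the qualification condition holds at $\boldsymbol\psi=0\in L$. Hence
\[
\sup_{L}G=\min_{\boldsymbol m\in L^\perp}(-G)^*(-\boldsymbol m),
\]
with the minimum attained in the dual $\mathcal M(Y)^p$ (Riesz). The annihilator $L^\perp$ consists of tuples $(\lambda_1\nu,\ldots,\lambda_p\nu)$ with a common signed measure $\nu$. The key computation is to identify $(-G)^*$. Its $i$th term is the conjugate of $\psi\mapsto\epsilon\int\log\int e^{(\psi-c_i)/\epsilon}\d\eta\,\d\nu_i$. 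By the Donsker--Varadhan variational formula applied fiberwise, this conjugate equals
\[
\inf\Bigl\{\int c_i\d\gamma_i+\epsilon\KL(\gamma_i\mid\xi_i):\gamma_i\in\Pi(\nu_i,\nu)\Bigr\}
\]
when $\nu\in\P(Y)$, and $+\infty$ otherwise. Here one disintegrates $\gamma_i$ over $\nu_i$ and uses the chain rule $\KL(\gamma_i\mid\nu_i\otimes\eta)=\int\KL(\gamma_i^x\mid\eta)\d\nu_i(x)$. Summing with weights and adding back the constant $\epsilon$ gives exactly $\EBC_{\epsilon,\eta}$ as in \eqref{eq:ebc-primal}, so the values coincide.

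The main obstacle is the conjugate computation. Two points need care there. The first is showing that the conjugate is $+\infty$ off probability measures: negativity is excluded by monotonicity of $G$, and the mass is forced by testing constants. The second is interchanging the supremum over $\psi\in C(Y)$ with the $x$-integration. Measurable selections are not directly available in $C(Y)$, so I would first prove the inequality $\le$ via Fenchel's inequality, which is essentially \cref{lem:duality-gap}. I would then obtain $\ge$ by lower semicontinuity and the density of continuous functions in $L^1$, or equivalently by invoking the two-marginal entropic duality \eqref{eq:entropic-ot-dual} once $\nu$ is fixed. An alternative route is to minimize first over plans with fixed $\nu$ using \eqref{eq:entropic-ot-dual} and then exchange $\inf_\nu$ with the supremum by a minimax theorem, using compactness of $\P(Y)$. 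That route would also suffice, and I would keep it as a fallback.
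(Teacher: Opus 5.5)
The paper gives no proof of this theorem. It is imported from Li, Genevay, Yurochkin and Solomon, and only the weak inequality appears in the text, through \cref{lem:duality-gap}. Your Fenchel--Rockafellar argument is a correct, self-contained alternative. The following steps are all right:
the reduction to $G(\boldsymbol\psi)=-\sum_i\lambda_iH_i(\psi_i)-\epsilon$ with $H_i(\psi):=\epsilon\int_{X_i}\log\int_Ye^{(\psi-c_i)/\epsilon}\d\eta\d\nu_i$;
the qualification condition ($G$ is finite and Lipschitz);
the identification $L^\perp=\{(\lambda_1\nu,\ldots,\lambda_p\nu)\}$;
the exclusion of $\nu\notin\P(Y)$ by monotonicity and $H_i(\psi+a)=H_i(\psi)+a$;
and the constant bookkeeping.
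Your argument also makes visible that the optimum is attained on the measure side, while it says nothing about attainment in $C(Y)^p$.

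The ordering you respected is not forced, however. The proofs of \cref{lem:source-transform}, \cref{lem:target-transform}, \cref{thm:dual-attainment} and \cref{cor:regularity} never use this theorem, and together with \cref{lem:duality-gap} they prove it directly. For a continuous maximizer, put $\gamma_i:=e^{s_i}\xi_i$ with $s_i$ as in \cref{lem:duality-gap}. The identity $\phi_i^*=\T_i\psi_i^*$ gives first marginal $\nu_i$. The identity $\psi_i^*=\Btrans_i(\boldsymbol\phi^*)$ gives every $\gamma_i$ the same second marginal, of mass one, with $\eta$-density $\prod_j(A_j^*)^{\lambda_j}$ ($A_j^*$ as in \cref{sec:optimality}). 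This feasible tuple has zero gap in \eqref{eq:duality-gap}. That route needs no conjugates, no Riesz representation and no two-marginal duality. Yours, in exchange, is logically independent of the Arzel\`a--Ascoli argument.

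Within your own route, the step to tighten is $H_i^*(\nu)\ge P_i(\nu):=\inf_{\gamma\in\Pi(\nu_i,\nu)}[\int c_i\d\gamma+\epsilon\KL(\gamma\mid\xi_i)]$. This is two-marginal strong duality for the reference $\nu_i\otimes\eta$. It is not literally \eqref{eq:entropic-ot-dual}, whose reference is $\mu\otimes\nu$. Converting between them needs three things:
the identity $\KL(\gamma\mid\nu_i\otimes\eta)=\KL(\gamma\mid\nu_i\otimes\nu)+\KL(\nu\mid\eta)$;
the shift $\psi'+\epsilon\log\frac{\d\nu}{\d\eta}$, which is discontinuous and possibly infinite, so it must be truncated and approximated by continuous functions;
and a separate treatment of the case $\KL(\nu\mid\eta)=\infty$.
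So ``equivalently'' hides real work.

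Your lower-semicontinuity option is the cleaner fix. Extended by $+\infty$ off $\P(Y)$, $P_i$ is convex, proper and weak$^*$ lower semicontinuous on $\mathcal M(Y)$, by compactness of $\P(X_i\times Y)$ and lower semicontinuity of $\KL$. Its conjugate on $C(Y)$ is exactly $H_i$. This follows from the chain rule $\KL(\gamma\mid\nu_i\otimes\eta)=\int\KL(\gamma^x\mid\eta)\d\nu_i(x)$ and fiberwise Donsker--Varadhan, with the supremum attained by the Gibbs kernel. Fenchel--Moreau then yields $H_i^*=P_i$. Since the Gibbs kernel is continuous in $x$, the measurable-selection issue you anticipate never arises.

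Finally, $G$ is concave, not convex as you first call it.
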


The theorem identifies the optimal value, but it does not by itself produce a
maximizing family in $\A_C$.  We now prove that such continuous maximizers
exist.

\section{Existence of Continuous Dual Maximizers}\label{sec:attainment}

For $\psi\in C(Y)$, define the source entropic transform
\begin{equation*}
(\T_i\psi)(x)
:=-\epsilon\log
\int_Y
\exp\!\left(
\frac{\psi(y)-c_i(x,y)}{\epsilon}
\right)
\d\eta(y),
\qquad x\in X_i,
\end{equation*}
and set
\begin{equation*}
\sigma_\epsilon(\psi)
:=
\epsilon\log\int_Ye^{\psi/\epsilon}\d\eta.
\end{equation*}

\begin{lemma}[Source Transform]\label{lem:source-transform}
For fixed $\psi_i\in C(Y)$, replacing $\phi_i$ by $\T_i\psi_i$ does not
decrease the dual functional.  Equality holds only when
$\phi_i=\T_i\psi_i$ on $X_i$.  Moreover,
\begin{align}
\T_i(\psi_i+a)&=\T_i\psi_i-a,
\qquad a\in\R,\notag\\
\left\|\T_i\psi_i+\sigma_\epsilon(\psi_i)\right\|_\infty
&\le \|c_i\|_\infty,
\label{eq:source-bound}\\
|\T_i\psi_i(x)-\T_i\psi_i(x')|
&\le
\sup_{y\in Y}|c_i(x,y)-c_i(x',y)|.
\label{eq:source-modulus}
\end{align}
After the replacement,
\begin{equation}\label{eq:source-normalization}
\int_{X_i\times Y}
\exp\!\left(
\frac{\T_i\psi_i(x)+\psi_i(y)-c_i(x,y)}{\epsilon}
\right)
\d\xi_i(x,y)
=1.
\end{equation}
\end{lemma}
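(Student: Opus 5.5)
The plan is to fix $i$ and $\psi_i$ and study the contribution of $\phi_i$ to $J_\epsilon$ pointwise in $x$. By Fubini with $\xi_i=\nu_i\otimes\eta$, the $i$th bracket in \eqref{eq:dual-functional} equals
\[
\int_{X_i}\Bigl[\phi_i(x)-\epsilon e^{\phi_i(x)/\epsilon}A_i(x)\Bigr]\d\nu_i(x),
\qquad
A_i(x):=\int_Y\exp\!\left(\frac{\psi_i(y)-c_i(x,y)}{\epsilon}\right)\d\eta(y).
\]
Note that $A_i$ is continuous and strictly positive. Continuity follows from uniform continuity of $c_i$ on the compact set $X_i\times Y$. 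Positivity holds because the integrand is positive and $\eta$ is a probability measure. By definition, $\T_i\psi_i=-\epsilon\log A_i$, so $\T_i\psi_i\in C(X_i)$. For fixed $x$, the scalar function $t\mapsto t-\epsilon e^{t/\epsilon}A_i(x)$ is strictly concave, and its derivative $1-e^{t/\epsilon}A_i(x)$ vanishes exactly at $t=-\epsilon\log A_i(x)$. Hence $\T_i\psi_i(x)$ is the unique pointwise maximizer, and the replacement does not decrease the integrand at any $x$. The other summands of $J_\epsilon$ do not involve $\phi_i$, and the constraint $\sum_i\lambda_i\psi_i=0$ is unaffected, so the replaced family stays in $\A_C$ and $J_\epsilon$ does not decrease.

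For the equality case, suppose $J_\epsilon$ is unchanged. Then the integral of the nonnegative continuous difference $g(x)$ between the integrand at $\T_i\psi_i(x)$ and at $\phi_i(x)$ vanishes. Thus $g=0$ $\nu_i$-a.e. Since $g$ is continuous and $\spt\nu_i=X_i$, we get $g\equiv0$. Strict concavity then forces $\phi_i=\T_i\psi_i$ everywhere. This step, using the support assumption to pass from a.e. to pointwise, is the only place needing care.

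For the properties, the shift identity is immediate: $e^{(\psi+a)/\epsilon}=e^{a/\epsilon}e^{\psi/\epsilon}$ pulls out of the integral, and the logarithm produces $-a$. For \eqref{eq:source-bound}, we bound
\[
e^{-\|c_i\|_\infty/\epsilon}\int_Y e^{\psi_i/\epsilon}\d\eta\le A_i(x)\le e^{\|c_i\|_\infty/\epsilon}\int_Y e^{\psi_i/\epsilon}\d\eta.
\]
Taking $-\epsilon\log$ gives $|\T_i\psi_i(x)+\sigma_\epsilon(\psi_i)|\le\|c_i\|_\infty$.

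For \eqref{eq:source-modulus}, let $\omega=\sup_y|c_i(x,y)-c_i(x',y)|$. Then $e^{-c_i(x,y)/\epsilon}\le e^{\omega/\epsilon}e^{-c_i(x',y)/\epsilon}$, so $A_i(x)\le e^{\omega/\epsilon}A_i(x')$, and symmetrically. Taking logarithms gives the modulus bound.

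Finally, for \eqref{eq:source-normalization}, Fubini gives
\[
\int_{X_i}e^{\T_i\psi_i(x)/\epsilon}A_i(x)\d\nu_i(x)=\int_{X_i}1\d\nu_i=1,
\]
because $e^{\T_i\psi_i/\epsilon}=1/A_i$ and $\nu_i$ is a probability measure.
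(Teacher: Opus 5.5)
Your proposal is correct and follows essentially the same argument as the paper: pointwise maximization of the strictly concave function $t\mapsto t-\epsilon A_i(x)e^{t/\epsilon}$, the full-support assumption to upgrade the $\nu_i$-a.e.\ equality case to a pointwise one, and elementary two-sided bounds on $A_i$ for the shift, sup-norm, modulus and normalization statements. Your explicit check that $A_i$ is continuous, so that $\T_i\psi_i\in C(X_i)$ and the modified family stays in $\A_C$, is a small detail the paper leaves implicit.
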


\begin{proof}
Fix $x\in X_i$ and write
\[
A(x)
:=
\int_Y
\exp\!\left(
\frac{\psi_i(y)-c_i(x,y)}{\epsilon}
\right)
\d\eta(y).
\]
The quantity $A(x)$ is positive.  For this fixed $x$, the part of the dual
functional that depends on the scalar value $t=\phi_i(x)$ is
\[
t-\epsilon A(x)e^{t/\epsilon}.
\]
This function is strictly concave.  Its derivative is
$1-A(x)e^{t/\epsilon}$, so its unique maximum occurs at
$t=-\epsilon\log A(x)=\T_i\psi_i(x)$.  We may therefore maximize pointwise
in $x$ and then integrate with respect to $\nu_i$.  This shows that replacing
$\phi_i$ by $\T_i\psi_i$ cannot lower the dual value.  If equality holds,
the two source potentials agree $\nu_i$-almost everywhere.  Both are
continuous and $\nu_i$ has full support, so they agree on all of $X_i$.

We now verify the stated identities.  Adding a constant $a$ to $\psi_i$
multiplies the integral defining $A(x)$ by $e^{a/\epsilon}$.  Taking
$-\epsilon\log$ gives the translation identity.  Let
$M_i=\|c_i\|_\infty$.  Then
\[
e^{-M_i/\epsilon}
\int_Ye^{\psi_i/\epsilon}\d\eta
\le A(x)\le
e^{M_i/\epsilon}
\int_Ye^{\psi_i/\epsilon}\d\eta.
\]
Taking logarithms yields \eqref{eq:source-bound}.

For the modulus estimate, set
\[
\delta:=\sup_{y\in Y}|c_i(x,y)-c_i(x',y)|.
\]
The two integrands defining $A(x)$ and $A(x')$ differ by a factor between
$e^{-\delta/\epsilon}$ and $e^{\delta/\epsilon}$.  After taking
$-\epsilon\log$, this gives \eqref{eq:source-modulus}.  Finally, for every
$x$,
\[
\int_Y
\exp\!\left(
\frac{\T_i\psi_i(x)+\psi_i(y)-c_i(x,y)}{\epsilon}
\right)
\d\eta(y)=1.
\]
Integrating this identity with respect to $\nu_i$ proves
\eqref{eq:source-normalization}.  This completes the proof.
\end{proof}

For $\phi_i\in C(X_i)$, define the reverse transform
\begin{equation*}
(\Strans_i\phi_i)(y)
:=-\epsilon\log
\int_{X_i}
\exp\!\left(
\frac{\phi_i(x)-c_i(x,y)}{\epsilon}
\right)
\d\nu_i(x).
\end{equation*}
For a family
$\boldsymbol\phi=(\phi_1,\ldots,\phi_p)$, set
\begin{equation*}
\Btrans_i(\boldsymbol\phi)
:=
\Strans_i\phi_i
-
\sum_{j=1}^p\lambda_j\Strans_j\phi_j.
\end{equation*}
Then
\begin{equation*}
\sum_{i=1}^p\lambda_i\Btrans_i(\boldsymbol\phi)=0.
\end{equation*}

\begin{lemma}[Barycentric Target Transform]
\label{lem:target-transform}
Fix $\phi_i\in C(X_i)$ for $i=1,\ldots,p$.  Among all continuous target
potentials satisfying
$\sum_i\lambda_i\psi_i=0$, the dual functional is uniquely maximized by
\begin{equation*}
\psi_i=\Btrans_i(\boldsymbol\phi),
\qquad i=1,\ldots,p.
\end{equation*}
Furthermore,
\begin{align}
|\Strans_i\phi_i(y)-\Strans_i\phi_i(y')|
&\le
\omega_i^Y(y,y'),\notag\\
|\Btrans_i(\boldsymbol\phi)(y)
-\Btrans_i(\boldsymbol\phi)(y')|
&\le
\omega_i^Y(y,y')
+
\sum_{j=1}^p\lambda_j\omega_j^Y(y,y'),
\label{eq:target-modulus}
\end{align}
where
\begin{equation*}
\omega_i^Y(y,y')
:=
\sup_{x\in X_i}|c_i(x,y)-c_i(x,y')|.
\end{equation*}
If the functions $\phi_i$ are uniformly bounded, then so are the functions
$\Btrans_i(\boldsymbol\phi)$.
\end{lemma}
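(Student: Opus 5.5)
With $\boldsymbol\phi$ fixed, the $\psi$-dependent part of the dual functional is
\[
G(\boldsymbol\psi):=-\epsilon\sum_{i=1}^p\lambda_i\int_Y e^{\psi_i(y)/\epsilon}\,e^{-\Strans_i\phi_i(y)/\epsilon}\d\eta(y).
\]
This follows from Fubini, since
\[
\int_{X_i}\exp\!\left(\frac{\phi_i(x)-c_i(x,y)}{\epsilon}\right)\d\nu_i(x)=e^{-\Strans_i\phi_i(y)/\epsilon}.
\]
The constraint $\sum_i\lambda_i\psi_i=0$ acts pointwise in $y$, so I will maximize pointwise. Fix $y$ and write $t_i=\psi_i(y)$ and $b_i=\Strans_i\phi_i(y)$. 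The task is to minimize $\sum_i\lambda_i e^{(t_i-b_i)/\epsilon}$ subject to $\sum_i\lambda_it_i=0$.

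The cleanest route is the weighted AM--GM (Jensen) inequality for the exponential:
\[
\sum_i\lambda_i e^{(t_i-b_i)/\epsilon}\ \ge\ \exp\!\Big(\sum_i\lambda_i\frac{t_i-b_i}{\epsilon}\Big)=\exp\!\Big(-\frac{\bar b}{\epsilon}\Big),
\qquad \bar b:=\sum_j\lambda_jb_j.
\]
Strict convexity of $\exp$ and $\lambda_i>0$ give equality if and only if $t_i-b_i$ is independent of $i$. Together with the constraint, this forces $t_i-b_i=-\bar b$, that is, $t_i=b_i-\bar b=\Btrans_i(\boldsymbol\phi)(y)$. The competitor $\Btrans_i(\boldsymbol\phi)$ is continuous and satisfies the zero-sum condition, so it lies in the admissible class. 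Integrating the pointwise inequality against $\eta$ shows that it maximizes $G$.

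For uniqueness, suppose another admissible continuous $\boldsymbol\psi$ attains the same value. Then the pointwise gap, which is nonnegative and continuous, integrates to zero against $\eta$. It therefore vanishes $\eta$-a.e., and hence everywhere because $\spt\eta=Y$. This is the only delicate point, and it uses the full-support reduction made in \Cref{sec:barycenter}. The pointwise equality case then gives $\psi_i=\Btrans_i(\boldsymbol\phi)$ on $Y$.

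The modulus estimates follow the proof of \eqref{eq:source-modulus} in \Cref{lem:source-transform}. The integrands defining $e^{-\Strans_i\phi_i(y)/\epsilon}$ and $e^{-\Strans_i\phi_i(y')/\epsilon}$ differ by a factor in $[e^{-\omega_i^Y/\epsilon},e^{\omega_i^Y/\epsilon}]$, and taking $-\epsilon\log$ gives the first bound. The triangle inequality applied to $\Strans_i\phi_i-\sum_j\lambda_j\Strans_j\phi_j$ then gives \eqref{eq:target-modulus}.

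For boundedness, suppose $\|\phi_j\|_\infty\le K$ for all $j$. Then the integrand defining $\Strans_j\phi_j$ lies between $e^{-(K+\|c_j\|_\infty)/\epsilon}$ and $e^{(K+\|c_j\|_\infty)/\epsilon}$. Since $\nu_j$ is a probability measure, this gives $|\Strans_j\phi_j|\le K+\|c_j\|_\infty$. Consequently $|\Btrans_i(\boldsymbol\phi)|\le 2K+\|c_i\|_\infty+\sum_j\lambda_j\|c_j\|_\infty$.
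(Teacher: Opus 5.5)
Your proposal is correct and follows essentially the same route as the paper: you minimize $\sum_i\lambda_iA_i(y)e^{\psi_i(y)/\epsilon}$ pointwise under the zero-sum constraint by weighted AM--GM, upgrade uniqueness to all of $Y$ using $\spt\eta=Y$, and obtain the moduli and bounds from the multiplicative-factor argument. Your Jensen form with $A_i=e^{-\Strans_i\phi_i/\epsilon}$ is the same inequality. Your uniqueness step is slightly more explicit than the paper's, since you note that the nonnegative pointwise gap is continuous and integrates to zero.
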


\begin{proof}
We first optimize the target potentials at a fixed point $y\in Y$.  Define
\[
A_i(y)
:=
\int_{X_i}
\exp\!\left(
\frac{\phi_i(x)-c_i(x,y)}{\epsilon}
\right)
\d\nu_i(x)>0.
\]
For fixed source potentials, maximizing the terms involving $\psi_i(y)$ is
equivalent to minimizing the quantity
\[
\sum_{i=1}^p\lambda_iA_i(y)e^{\psi_i(y)/\epsilon},
\]
under the constraint
$\sum_{i=1}^p\lambda_i\psi_i(y)=0$.  The weighted
arithmetic--geometric mean inequality gives
\[
\sum_{i=1}^p\lambda_iA_i e^{\psi_i/\epsilon}
\ge
\prod_{i=1}^p
\bigl(A_i e^{\psi_i/\epsilon}\bigr)^{\lambda_i}
=
\prod_{i=1}^pA_i^{\lambda_i}.
\]
Equality holds exactly when all the quantities
$A_i e^{\psi_i/\epsilon}$ are equal.  Combining this equality condition with
the weighted zero-sum constraint gives
\[
\psi_i
=-\epsilon\log A_i
+\epsilon\sum_{j=1}^p\lambda_j\log A_j
=\Btrans_i(\boldsymbol\phi).
\]
This candidate is continuous and satisfies the zero-sum condition.  The
pointwise equality condition is unique.  Since $\eta$ has full support,
uniqueness almost everywhere becomes uniqueness in $C(Y)^p$.

We next check the regularity estimates.  If
$\delta=\omega_i^Y(y,y')$, then the integrals defining
$\Strans_i\phi_i(y)$ and $\Strans_i\phi_i(y')$ differ by multiplicative
factors between $e^{-\delta/\epsilon}$ and $e^{\delta/\epsilon}$.  Taking
$-\epsilon\log$ gives the first estimate in the lemma.  The estimate
\eqref{eq:target-modulus} then follows from the definition of
$\Btrans_i$.  Finally, if $\|\phi_i\|_\infty\le C$, then
$|\Strans_i\phi_i|\le C+\|c_i\|_\infty$.  The definition of the
barycentric transform now gives a uniform bound for every
$\Btrans_i(\boldsymbol\phi)$.  This completes the proof.
\end{proof}

The dual has a gauge freedom.  If constants $a_1,\ldots,a_p$ satisfy
\begin{equation*}
\sum_{i=1}^p\lambda_i a_i=0,
\end{equation*}
then
\begin{equation}\label{eq:gauge-transformation}
\phi_i\longmapsto\phi_i-a_i,
\qquad
\psi_i\longmapsto\psi_i+a_i,
\end{equation}
preserves the zero-sum condition, the exponential terms, and the value of
$J_\epsilon$.

\begin{theorem}[Continuous Dual Attainment]
\label{thm:dual-attainment}
There exists
$(\boldsymbol\phi^*,\boldsymbol\psi^*)\in\A_C$ such that
\begin{equation*}
J_\epsilon(\boldsymbol\phi^*,\boldsymbol\psi^*)
=
\sup_{(\boldsymbol\phi,\boldsymbol\psi)\in\A_C}
J_\epsilon(\boldsymbol\phi,\boldsymbol\psi).
\end{equation*}
More precisely, every maximizing sequence can be modified without decreasing
its values so that the modified sequence has a uniformly convergent
subsequence.
\end{theorem}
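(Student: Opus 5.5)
We start from a maximizing sequence $(\boldsymbol\phi^n,\boldsymbol\psi^n)\in\A_C$ and improve it in three moves, none of which decreases $J_\epsilon$. First, we replace each $\phi_i^n$ by $\T_i\psi_i^n$. By \cref{lem:source-transform} this does not decrease $J_\epsilon$. After this step the exponential term of the $i$th summand equals $\epsilon$ by \eqref{eq:source-normalization}, so
\[
J_\epsilon=\sum_i\lambda_i\int_{X_i}\T_i\psi_i^n\d\nu_i-\epsilon .
\]
Second, with these source potentials fixed, we replace $\boldsymbol\psi^n$ by $\Btrans_i(\boldsymbol\phi^n)$. By \cref{lem:target-transform} this again does not decrease $J_\epsilon$. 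The new target potentials satisfy the modulus bound \eqref{eq:target-modulus}, which is uniform in $n$ because the $c_i$ are uniformly continuous on compact sets. Third, we apply the source transform once more. This produces new source potentials satisfying the uniform modulus \eqref{eq:source-modulus} and the bound \eqref{eq:source-bound}.

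The remaining issue, and the main obstacle, is uniform boundedness. The modulus bounds control oscillation but not level, and \eqref{eq:source-bound} only pins $\T_i\psi_i$ to $-\sigma_\epsilon(\psi_i)$. We use the gauge \eqref{eq:gauge-transformation} to remove the free constants. Concretely, let $m_i^n:=\int_Y\psi_i^n\d\eta$ and set $a_i=m_i^n$. Then $\sum_i\lambda_i a_i=\int_Y\sum_i\lambda_i\psi_i^n\d\eta=0$. After this shift each $\psi_i^n$ has $\eta$-mean zero and leaves $J_\epsilon$ unchanged, and the source transform commutes with the shift by the translation identity. Since $\spt\eta=Y$ and the $\psi_i^n$ have a uniform modulus of continuity, a mean-zero function satisfies $\|\psi_i^n\|_\infty\le\sup_{y,y'}|\psi_i^n(y)-\psi_i^n(y')|$. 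By \eqref{eq:target-modulus}, this is at most $\sup_{y,y'}\bigl(\omega_i^Y+\sum_j\lambda_j\omega_j^Y\bigr)$, a constant depending only on the costs. Consequently $\sigma_\epsilon(\psi_i^n)$ is bounded, and \eqref{eq:source-bound} bounds $\T_i\psi_i^n$ uniformly.

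We must order the steps carefully so that the final pair carries both the equicontinuity and the bounds. We therefore run source transform, target transform, gauge normalization, and source transform, and keep the last $\boldsymbol\phi$ together with the normalized $\boldsymbol\psi$. Each step is monotone, so the modified sequence is still maximizing. Gauge normalization preserves the zero-sum constraint, and so does the final source step, which leaves $\boldsymbol\psi$ untouched.

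Arzel\`a--Ascoli on each compact space now yields a subsequence along which $\phi_i^n\to\phi_i^*$ in $C(X_i)$ and $\psi_i^n\to\psi_i^*$ in $C(Y)$ uniformly. The constraint $\sum_i\lambda_i\psi_i^*=0$ passes to the uniform limit, so the limit lies in $\A_C$. Because all functions are uniformly bounded, the exponential integrands converge uniformly, so $J_\epsilon$ is continuous along the subsequence. Hence $J_\epsilon(\boldsymbol\phi^*,\boldsymbol\psi^*)=\lim J_\epsilon(\boldsymbol\phi^n,\boldsymbol\psi^n)=\sup_{\A_C}J_\epsilon$. The supremum is finite by \cref{thm:value-duality} and \cref{thm:primal-existence}. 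This gives both claims of the theorem.
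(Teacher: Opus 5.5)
Your proof is correct, but it gets the uniform bounds by a different route than the paper. The paper first applies the source transform and uses the weighted H\"older inequality to get $\overline s_n\ge0$. It then chooses the gauge so that every $\sigma_\epsilon(\psi_{i,n})$ equals $\overline s_n$. Next it uses the maximizing property through \eqref{eq:value-soft-mean-bounds}: a lower bound on the values caps $\overline s_n$, which bounds the sources. Only then does it apply $\Btrans_i$, whose outputs are bounded because the sources are.

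You apply $\Btrans_i$ first, so each target has oscillation at most $2\|c_i\|_\infty+2\sum_j\lambda_j\|c_j\|_\infty$ whatever $\boldsymbol\phi$ is. The mean-zero gauge turns this into a sup-norm bound, since $\min\psi\le\int_Y\psi\d\eta\le\max\psi$. A final source transform then passes the bound to the sources via \eqref{eq:source-bound}.

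Your route uses neither the H\"older step nor the fact that the sequence is maximizing. It improves every element of $\A_C$ to one in a fixed, cost-determined, relatively compact subset of $\A_C$. This gives explicit a priori bounds for a maximizer, and finiteness of the supremum without reference to the primal problem. The paper's route instead ties the level of the potentials to the dual value through the soft means $\sigma_\epsilon(\psi_i)$.

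Two minor points:
\begin{itemize}
\item With the convention $\psi_i\mapsto\psi_i+a_i$ of \eqref{eq:gauge-transformation}, you need $a_i=-m_i^n$ rather than $a_i=m_i^n$. This is only a sign slip, since both choices satisfy $\sum_i\lambda_i a_i=0$.
\item Your first source transform is unnecessary, because $\Btrans_i$ can be applied directly to the original $\boldsymbol\phi^n$.
\end{itemize}
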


\begin{proof}
Let $(\boldsymbol\phi_n,\boldsymbol\psi_n)\subset\A_C$ be a maximizing
sequence.  We improve this sequence in several steps, always without lowering
its dual value.

First, apply the source transform from \Cref{lem:source-transform} and replace
$\phi_{i,n}$ by $\T_i\psi_{i,n}$.  The sequence remains maximizing, and each
exponential integral in \eqref{eq:dual-functional} is now equal to one.
Define
\begin{equation*}
s_{i,n}:=\sigma_\epsilon(\psi_{i,n}),
\qquad
\overline s_n:=\sum_{i=1}^p\lambda_i s_{i,n}.
\end{equation*}
Because $\sum_i\lambda_i\psi_{i,n}=0$, the weighted H\"older inequality gives
\begin{align*}
1
&=
\int_Y
\prod_{i=1}^p e^{\lambda_i\psi_{i,n}/\epsilon}\d\eta
\le
\prod_{i=1}^p
\left(
\int_Ye^{\psi_{i,n}/\epsilon}\d\eta
\right)^{\lambda_i}
=
e^{\overline s_n/\epsilon}.
\end{align*}
Thus $\overline s_n\ge0$.

We now use the gauge freedom to put all target potentials on the same scale.
Set
\[
a_{i,n}:=\overline s_n-s_{i,n}.
\]
Then $\sum_i\lambda_i a_{i,n}=0$.  After applying
\eqref{eq:gauge-transformation}, we have
\[
\sigma_\epsilon(\psi_{i,n})=\overline s_n
\qquad\text{for every }i.
\]
The source-transform identities are preserved.  If
$M_i:=\|c_i\|_\infty$, \eqref{eq:source-bound} now gives
\[
-\overline s_n-M_i
\le
\phi_{i,n}
\le
-\overline s_n+M_i.
\]
Because all exponential integrals are one,
\[
J_\epsilon(\boldsymbol\phi_n,\boldsymbol\psi_n)
=
\sum_{i=1}^p\lambda_i
\int_{X_i}\phi_{i,n}\d\nu_i
-\epsilon.
\]
Let $M_0:=\sum_i\lambda_iM_i$.  We obtain
\begin{equation}\label{eq:value-soft-mean-bounds}
-\overline s_n-M_0-\epsilon
\le
J_\epsilon(\boldsymbol\phi_n,\boldsymbol\psi_n)
\le
-\overline s_n+M_0-\epsilon.
\end{equation}
The values of a maximizing sequence are bounded below.  The upper bound in
\eqref{eq:value-soft-mean-bounds} therefore gives a uniform upper bound for
$\overline s_n$.  Since $\overline s_n\ge0$, the source potentials are
uniformly bounded.  They are also equicontinuous by
\eqref{eq:source-modulus} and the uniform continuity of the costs.

Next, replace each target potential by
\[
\psi_{i,n}\longmapsto\Btrans_i(\boldsymbol\phi_n).
\]
By \Cref{lem:target-transform}, this replacement preserves the zero-sum
condition and cannot decrease the dual value.  The resulting target
potentials are uniformly bounded and equicontinuous.  The modified sequence
is therefore still maximizing, and both the source and target families have
uniform bounds and common moduli of continuity.

Finally, apply the Arzel\`a--Ascoli theorem.  A diagonal extraction over the
finitely many families gives a subsequence and continuous functions
$\phi_i^*$ and $\psi_i^*$ such that
\[
\phi_{i,n}\longrightarrow\phi_i^*
\quad\text{uniformly on }X_i,
\qquad
\psi_{i,n}\longrightarrow\psi_i^*
\quad\text{uniformly on }Y.
\]
The weighted zero-sum condition passes to the limit.  Uniform convergence of
the potentials and continuity of the costs imply uniform convergence of the
exponential integrands.  Hence the dual functional also passes to the limit.
The limiting family belongs to $\A_C$ and attains the supremum.  This
completes the proof.
\end{proof}

\begin{corollary}[Transform Identities and Regularity]
\label{cor:regularity}
Every continuous dual maximizer satisfies
\begin{equation}\label{eq:maximizer-transforms}
\phi_i^*=\T_i\psi_i^*,
\qquad
\psi_i^*=\Btrans_i(\boldsymbol\phi^*),
\qquad i=1,\ldots,p.
\end{equation}
Consequently,
\begin{align*}
|\phi_i^*(x)-\phi_i^*(x')|
&\le
\sup_{y\in Y}|c_i(x,y)-c_i(x',y)|,
\\
|\psi_i^*(y)-\psi_i^*(y')|
&\le
\omega_i^Y(y,y')
+
\sum_{j=1}^p\lambda_j\omega_j^Y(y,y').
\end{align*}
In particular, if $c_i$ is $L_i^X$-Lipschitz in its first variable and
$L_i^Y$-Lipschitz in its second variable, then $\phi_i^*$ is
$L_i^X$-Lipschitz and $\psi_i^*$ is
$\bigl(L_i^Y+\sum_j\lambda_jL_j^Y\bigr)$-Lipschitz.
\end{corollary}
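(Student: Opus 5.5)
The plan is to derive \eqref{eq:maximizer-transforms} from the strict-improvement parts of \Cref{lem:source-transform} and \Cref{lem:target-transform}. The regularity estimates then follow from the modulus bounds \eqref{eq:source-modulus} and \eqref{eq:target-modulus}.

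Let $(\boldsymbol\phi^*,\boldsymbol\psi^*)\in\A_C$ be any continuous maximizer.

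\emph{Source identities.} For a fixed index $i$, replace $\phi_i^*$ by $\T_i\psi_i^*$ and keep all other components unchanged.
\begin{itemize}
\item The new family is still in $\A_C$: the zero-sum condition involves only the $\psi_j$, and $\T_i\psi_i^*$ is continuous by \eqref{eq:source-modulus}.
\item The functional $J_\epsilon$ is a sum over $i$ of terms that each involve only $(\phi_i,\psi_i)$. So by \Cref{lem:source-transform} the replacement does not decrease $J_\epsilon$.
\item Since $(\boldsymbol\phi^*,\boldsymbol\psi^*)$ is already a maximizer, the value cannot increase either, so equality holds.
\item The equality clause of \Cref{lem:source-transform} then gives $\phi_i^*=\T_i\psi_i^*$ on $X_i$. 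Full support of $\nu_i$ is already built into that lemma.
\end{itemize}

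\emph{Target identities.} Hold $\boldsymbol\phi^*$ fixed. By \Cref{lem:target-transform}, the family $(\Btrans_i(\boldsymbol\phi^*))_i$ is the unique maximizer of $J_\epsilon(\boldsymbol\phi^*,\cdot)$ among continuous target families satisfying the weighted zero-sum condition. The family $\boldsymbol\psi^*$ lies in this class. Its value is at least that of $\Btrans(\boldsymbol\phi^*)$, because $(\boldsymbol\phi^*,\boldsymbol\psi^*)$ is a global maximizer. Hence $\boldsymbol\psi^*$ is also a maximizer of this restricted problem, and uniqueness forces $\psi_i^*=\Btrans_i(\boldsymbol\phi^*)$.

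The main point needing care is exactly this uniqueness. It comes from the strict weighted AM--GM equality condition applied pointwise in $y$: a gap at a single point would, by continuity, persist on an open set of positive $\eta$-measure, since $\spt\eta=Y$. The lemma already packages this argument, so I would simply cite it.

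\emph{Regularity.} Once the identities hold, the two moduli estimates are immediate.
\begin{itemize}
\item Applying \eqref{eq:source-modulus} to $\psi=\psi_i^*$ gives the bound for $\phi_i^*$.
\item Applying \eqref{eq:target-modulus} to $\boldsymbol\phi=\boldsymbol\phi^*$ gives the bound for $\psi_i^*$.
\end{itemize}

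\emph{Lipschitz case.} Suppose $|c_i(x,y)-c_i(x',y)|\le L_i^X d(x,x')$ uniformly in $y$, and $\omega_j^Y(y,y')\le L_j^Y d(y,y')$. Substituting these into the two bounds yields the stated constants $L_i^X$ and $L_i^Y+\sum_j\lambda_jL_j^Y$.
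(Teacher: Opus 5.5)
Your proposal is correct and follows essentially the same route as the paper: the transform identities come from the equality and uniqueness clauses of \Cref{lem:source-transform} and \Cref{lem:target-transform} applied at a maximizer, and the regularity and Lipschitz bounds are then read off from \eqref{eq:source-modulus} and \eqref{eq:target-modulus}. Your explicit check that the modified family stays in $\A_C$ is a small detail the paper leaves implicit.
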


\begin{proof}
Suppose first that a maximizer did not satisfy the source-transform identity.
By the strict equality statement in \Cref{lem:source-transform}, replacing
its source potential would strictly increase the dual value, which is
impossible.  The same argument with \Cref{lem:target-transform} gives the
target-transform identity.  Thus \eqref{eq:maximizer-transforms} holds.  We
now apply \eqref{eq:source-modulus} and \eqref{eq:target-modulus} to obtain
the displayed regularity estimates.  The Lipschitz statements are the
corresponding special cases.  This completes the proof.
\end{proof}

\section{Optimality Relations and Barycenter Density}\label{sec:optimality}

Let
$(\boldsymbol\phi^*,\boldsymbol\psi^*)$ be a continuous dual maximizer.  Set
\begin{equation*}
A_i^*(y)
:=
\int_{X_i}
\exp\!\left(
\frac{\phi_i^*(x)-c_i(x,y)}{\epsilon}
\right)
\d\nu_i(x).
\end{equation*}

\begin{theorem}[Primal--Dual Relations]
\label{thm:primal-dual-relations}
For each $i$, the unique optimal plan satisfies
\begin{equation}\label{eq:gibbs-plan}
\frac{\d\gamma_{i,\epsilon}}{\d\xi_i}(x,y)
=
\exp\!\left(
\frac{\phi_i^*(x)+\psi_i^*(y)-c_i(x,y)}{\epsilon}
\right)
\qquad \xi_i\text{-a.e.}.
\end{equation}
The potentials therefore satisfy
\begin{equation}\label{eq:first-marginal-system}
\int_Y
\exp\!\left(
\frac{\phi_i^*(x)+\psi_i^*(y)-c_i(x,y)}{\epsilon}
\right)
\d\eta(y)
=1
\qquad \nu_i\text{-a.e.},
\end{equation}
and
\begin{equation}\label{eq:target-potential-formula}
\psi_i^*(y)
=
\epsilon\left[
\sum_{j=1}^p\lambda_j\log A_j^*(y)
-
\log A_i^*(y)
\right].
\end{equation}
The barycenter is absolutely continuous with respect to $\eta$, with density
\begin{equation}\label{eq:barycenter-density}
\frac{\d\nu_\epsilon}{\d\eta}(y)
=
\prod_{j=1}^p A_j^*(y)^{\lambda_j}.
\end{equation}
This density is continuous and strictly positive on $Y$.  Hence
$\nu_\epsilon$ and $\eta$ are equivalent measures.
\end{theorem}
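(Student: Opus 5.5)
The plan is to combine strong value duality (\Cref{thm:value-duality}) with the existence of a continuous maximizer (\Cref{thm:dual-attainment}) and the exact gap identity of \Cref{lem:duality-gap}. Let $\boldsymbol\gamma_\epsilon=(\gamma_{1,\epsilon},\ldots,\gamma_{p,\epsilon})$ be the unique optimal tuple from \Cref{thm:primal-existence}. Its objective is finite, since it is bounded above by the value at $(\nu_i\otimes\eta)_i$.

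By \Cref{thm:value-duality} and attainment, $\mathcal F_\epsilon(\boldsymbol\gamma_\epsilon)=\epsilon+J_\epsilon(\boldsymbol\phi^*,\boldsymbol\psi^*)$. Hence the left side of \eqref{eq:duality-gap} is zero. The integrand $q_i\log q_i-q_i+e^{s_i}-q_is_i$ is pointwise nonnegative by the Fenchel inequality, and $\lambda_i>0$. So each integral vanishes, and the equality case of \Cref{lem:duality-gap} gives $q_i=e^{s_i}$ $\xi_i$-a.e. This is exactly \eqref{eq:gibbs-plan}.

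Next I derive the two marginal relations from the Gibbs form.

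\emph{First marginal.} Integrating \eqref{eq:gibbs-plan} in $y$ against $\eta$ and using that the first marginal of $\gamma_{i,\epsilon}$ is $\nu_i$ (Fubini on $\xi_i=\nu_i\otimes\eta$) gives \eqref{eq:first-marginal-system}, $\nu_i$-a.e. This is also immediate from $\phi_i^*=\T_i\psi_i^*$ in \Cref{cor:regularity}, which in fact makes it hold everywhere.

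\emph{Target potentials.} Formula \eqref{eq:target-potential-formula} is just the identity $\psi_i^*=\Btrans_i(\boldsymbol\phi^*)$ from \Cref{cor:regularity}, unwinding $\Strans_i\phi_i^*=-\epsilon\log A_i^*$.

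\emph{Second marginal.} Integrating \eqref{eq:gibbs-plan} in $x$ against $\nu_i$ shows that the second marginal of $\gamma_{i,\epsilon}$ has $\eta$-density $A_i^*(y)e^{\psi_i^*(y)/\epsilon}$. Substituting \eqref{eq:target-potential-formula} gives
\[
A_i^*e^{\psi_i^*/\epsilon}
=
A_i^*\exp\Bigl(\sum_j\lambda_j\log A_j^*-\log A_i^*\Bigr)
=
\prod_{j}(A_j^*)^{\lambda_j}.
\]
This is independent of $i$, as the common-marginal structure requires, and yields \eqref{eq:barycenter-density}. The barycenter $\nu_\epsilon$ is the common second marginal by \Cref{thm:primal-existence}.

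\emph{Continuity and positivity.} Each $A_j^*$ is continuous on $Y$: the integrand is jointly continuous on the compact set $X_j\times Y$, so dominated convergence (or uniform continuity) applies. Each $A_j^*$ is also bounded below by $\exp\bigl((-\|\phi_j^*\|_\infty-\|c_j\|_\infty)/\epsilon\bigr)>0$. So the density is continuous and bounded away from zero on $Y$. A density bounded between two positive constants gives mutual absolute continuity, so $\nu_\epsilon$ and $\eta$ are equivalent.

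I expect the main obstacle to be the first step: making sure the gap identity may be applied at the optimum with attained dual value, and passing from a vanishing weighted sum of nonnegative integrals to the pointwise a.e. equality for each $i$. Finiteness of $\mathcal F_\epsilon(\boldsymbol\gamma_\epsilon)$ and positivity of the weights handle this. The remaining steps are Fubini computations plus the transform identities of \Cref{cor:regularity}.
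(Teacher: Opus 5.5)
Your proposal is correct. Its skeleton matches the paper's: the gap identity yields the Gibbs form, you take marginals, and then use continuity and positivity of $A_j^*$. The one real difference is how you reach \eqref{eq:target-potential-formula} and \eqref{eq:barycenter-density}.

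You import $\psi_i^*=\Btrans_i(\boldsymbol\phi^*)$ from \Cref{cor:regularity} and substitute it into the second-marginal density $A_i^*e^{\psi_i^*/\epsilon}$. The $i$-independence of that density then appears as a consistency check.

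The paper goes the other way. It starts from the $p$ identities $\d\nu_\epsilon/\d\eta=e^{\psi_i^*/\epsilon}A_i^*$, which hold because all optimal plans share the second marginal $\nu_\epsilon$. It takes logarithms, averages with weights $\lambda_i$, and uses $\sum_i\lambda_i\psi_i^*=0$ to get the product density. \eqref{eq:target-potential-formula} then follows by substituting back.

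The paper's route uses only the Gibbs form and the primal common-marginal constraint. It does not rely on the strict-optimality part of \Cref{lem:target-transform}, and it shows the formula is forced by the coupling structure itself.

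Your route depends on that variational characterization, but it gives \eqref{eq:target-potential-formula} on all of $Y$ directly. The paper's substitution yields it $\eta$-a.e., with the upgrade to everywhere left to continuity and $\spt\eta=Y$. Likewise, your remark that $\phi_i^*=\T_i\psi_i^*$ makes \eqref{eq:first-marginal-system} hold everywhere, not just $\nu_i$-a.e., is a small strengthening of the statement.
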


\begin{proof}
We first compare the unique primal optimizer with a continuous dual
maximizer.  Strong duality says that the left-hand side of
\eqref{eq:duality-gap} is zero.  Every integrand on the right-hand side is
nonnegative.  Therefore equality must hold in the scalar Fenchel inequality
for each $i$, almost everywhere.  The equality condition in
\Cref{lem:duality-gap} gives \eqref{eq:gibbs-plan}.

We now take marginals.  Taking the first marginal of
\eqref{eq:gibbs-plan} gives \eqref{eq:first-marginal-system}.  Taking the
second marginal gives, for every $i$,
\begin{equation}\label{eq:second-marginal-density}
\frac{\d\nu_\epsilon}{\d\eta}(y)
=
e^{\psi_i^*(y)/\epsilon}A_i^*(y).
\end{equation}
Take logarithms in this identity, multiply the $i$th identity by
$\lambda_i$, and sum over $i$.  The terms involving the target potentials
vanish because $\sum_i\lambda_i\psi_i^*=0$.  This gives
\eqref{eq:barycenter-density}.  Substituting that density back into
\eqref{eq:second-marginal-density} yields
\eqref{eq:target-potential-formula}.  Finally, each $A_i^*$ is continuous and
strictly positive, so the barycenter density has the same properties.  This
completes the proof.
\end{proof}

The relations \eqref{eq:first-marginal-system}--\eqref{eq:barycenter-density}
form the barycentric Schr\"odinger system for this fixed-reference model.

\begin{corollary}[Uniqueness Modulo Gauge]
\label{cor:gauge-uniqueness}
Let
$(\boldsymbol\phi,\boldsymbol\psi)$ and
$(\widetilde{\boldsymbol\phi},\widetilde{\boldsymbol\psi})$ be two continuous
dual maximizers.  Then there are constants $a_1,\ldots,a_p$ satisfying
$\sum_i\lambda_i a_i=0$ such that
\begin{equation*}
\widetilde\phi_i=\phi_i-a_i,
\qquad
\widetilde\psi_i=\psi_i+a_i,
\qquad i=1,\ldots,p.
\end{equation*}
\end{corollary}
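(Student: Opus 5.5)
The plan is to compare both maximizers with the unique primal optimizer from \Cref{thm:primal-existence}. Applying \Cref{thm:primal-dual-relations} to each maximizer, \eqref{eq:gibbs-plan} holds for both. Since the left-hand sides are the same density $\d\gamma_{i,\epsilon}/\d\xi_i$, we get
\[
\phi_i(x)+\psi_i(y)=\widetilde\phi_i(x)+\widetilde\psi_i(y)
\qquad \xi_i\text{-a.e.}
\]
for each $i$. Both sides are continuous on $X_i\times Y$. Moreover, $\xi_i=\nu_i\otimes\eta$ has full support $X_i\times Y$, because $\spt\nu_i=X_i$ and $\spt\eta=Y$. Hence the identity holds at every point of $X_i\times Y$.

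Next we separate variables. Fix $i$ and rewrite the pointwise identity as
\[
\phi_i(x)-\widetilde\phi_i(x)=\widetilde\psi_i(y)-\psi_i(y)
\qquad\text{for all }(x,y).
\]
The left side depends only on $x$ and the right side only on $y$, so both equal a constant, which we call $a_i$. This gives $\widetilde\phi_i=\phi_i-a_i$ and $\widetilde\psi_i=\psi_i+a_i$.

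Finally, we check the constraint on the constants. Both families lie in $\A_C$, so
\[
0=\sum_i\lambda_i\widetilde\psi_i=\sum_i\lambda_i\psi_i+\sum_i\lambda_i a_i=\sum_i\lambda_i a_i .
\]
This is exactly the gauge condition required in the corollary.

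The only delicate point is the passage from the $\xi_i$-a.e.\ equality to a pointwise one. The full-support reduction made in \Cref{sec:barycenter} handles it: the two continuous functions differ on an open set if they differ at a point, and every nonempty open set has positive $\xi_i$-measure. Alternatively, one could argue through \Cref{cor:regularity}, but the Gibbs-density comparison is the most direct route.
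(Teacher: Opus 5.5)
Your proposal is correct and follows essentially the same route as the paper. Both arguments equate the two Gibbs densities from \eqref{eq:gibbs-plan} for the unique optimal plan, and both upgrade the $\xi_i$-a.e.\ identity to a pointwise one using continuity and the full support of $\xi_i=\nu_i\otimes\eta$. They then separate variables to obtain the constants $a_i$ and read off $\sum_i\lambda_i a_i=0$ from the two weighted zero-sum conditions.
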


\begin{proof}
Both maximizing families generate the same unique optimal plans through
\eqref{eq:gibbs-plan}.  Their exponential densities are therefore equal, so
\[
(\phi_i-\widetilde\phi_i)(x)
+
(\psi_i-\widetilde\psi_i)(y)
=0
\qquad \xi_i\text{-a.e.}.
\]
The left-hand side is continuous, and $\xi_i$ has full support on
$X_i\times Y$.  Hence this identity holds for every $(x,y)$.  Fixing $y$ and
varying $x$ shows that $\phi_i-\widetilde\phi_i$ is constant.  Fixing $x$
and varying $y$ shows that $\psi_i-\widetilde\psi_i$ is the opposite
constant.  Writing these constants as $a_i$ gives the stated gauge form.  The
two weighted zero-sum conditions then imply
$\sum_i\lambda_i a_i=0$.  This completes the proof.
\end{proof}

\section{Conclusions}\label{sec:conclusions}

The fixed-reference entropy-regularized barycenter problem was previously
known to satisfy equality between its primal and dual optimal values.  The
main result of this paper shows that the dual value is attained by continuous
potentials on compact metric spaces with continuous costs.  These potentials
recover the unique optimal transport plans and the unique barycenter, so the
dual variables provide a complete description of the primal solution.

The fixed reference measure has both a mathematical and a practical role.  It
provides a common measure for all entropy terms and specifies the domain on
which the barycenter is represented.  In numerical applications, it may be
chosen as a sampling distribution, a background measure, or a measure
supported on a prescribed computational region.  At the same time, different
reference measures can produce different regularized barycenters, and the
present results do not compare these choices.

The compactness assumption is used to obtain uniform bounds and apply the
Arzel\`a--Ascoli theorem.  A natural next step is to study noncompact spaces
under moment or coercivity assumptions.  Other directions include stability
with respect to the input measures, weights, costs, and reference measure;
convergence of discrete or sample-based dual problems; and numerical methods
based on the two transforms used in the attainment proof.  It would also be
useful to investigate the small-regularization limit, multimarginal
Schr\"odinger barycenters, and models with an adaptive reference measure.

\section*{Declarations}

\textbf{Funding.} No funds, grants, or other support were received.

\noindent\textbf{Competing Interests.} The author has no relevant financial or
non-financial interests to disclose.

\noindent\textbf{Data Availability.} No datasets were generated or analyzed during
the current study.

\end{document}